\title{The finite cohesiveness principle}
\author{Mengzhou Sun}
\keywords{reverse mathematics, cohesiveness principle, Ramsey's theorem, models of arithmetic}
\address{Department of Mathematics, National University of Singapore, Singapore 119076}
\email{sunm07@u.nus.edu}
\address{Institute of Mathematics, University of Warsaw, 
Banacha 2, 02-097 Warszawa, Poland}
\email{m.sun3@uw.edu.pl}
\subjclass{03B30, 03C62, 03F35, 03F30, 05D10}
\begin{document}

\begin{abstract}
    We investigate the logical strength of the cohesiveness principle when restricted to finite sequences of sets, denoted by $\fincoh$, over different base theories.
    Our main result shows that $\fincoh$ entails $\ind\Sigma_1^0$ over the weaker base theory $\RCA_0^*$, thereby answering a question posed by Fiori-Carones, Ko\l{}odziejczyk and Kowalik in~\cite{art:weakercousins}.
    In addition, we show that $\fincoh$ is not provable over $\WKL_0$.
    
\end{abstract}
\maketitle
The \defm{cohesiveness principle} ($\coh$) is a combinatorial statement asserting that, for any given sequence of sets, there exists an infinite set that cannot be partitioned into two infinite subsets by any member of the sequence.
This principle plays a central role in the study of reverse mathematics of Ramsey's Theorem for Pairs ($\RT_2^2$).    
Cholak, Jockusch, and Slaman~\cite{art:CJSramsey} showed that $\mathnoun{RT}^2_2$ decomposes\footnote{The original proof of $\RT_2^2\vdash\coh$ presented in \cite{art:CJSramsey} requires $\ind\Sigma_2^0$. A proof that can be formalized within $\RCA_0$ was later provided by Mileti\cite{MiletiPhD}.} into $\coh$ and Stable Ramsey's Theorem for Pairs ($\mathnoun{SRT}^2_2$) over $\RCA_0$, and that $\coh$ is $\Pi_1^1$-conservative over $\RCA_0$.

More recently, a series of papers~\cite{art:weakercousins, art:KKY, art:y_ramsey_rca*} by Fiori-Carones, Ko\l{}odziejczyk, Kowalik and Yokoyama systematically studied the strength of combinatorial principles related to $\RT^2_2$ over the weaker base theory $\RCA_0^*$.
It is shown in \cite{art:weakercousins} that most such principles, including $\RT_2^2$, $\SRT_2^2$ and $\CRT_2^2$ (Cohesive Ramsey's Theorem for Pairs), hold in a model of $\RCA_0^*+\neg\ind\Sigma_1^0$ if and only if they hold on a $\Sigma_1^0$-definable cut.
In particular, most of these principles are consistent with, and in fact $\forall\Pi_3^0$ conservative over $\RCA_0^*+\neg\ind\Sigma_1^0$.

However, the principle $\coh$ is an exception to this pattern, partly due to the additional first-order quantifier complexity of $\coh$ compared to other principles.
Accordingly, Fiori-Carones et al. posed the question~\cite[Question 5.8]{art:weakercousins} of whether $\coh$ is consistent with $\RCA_0^*+\neg\ind\Sigma_1^0$.
It is also worth mentioning that this question traces back to an earlier preprint of Belanger~\cite{Belanger}, in which he asked if $\coh$ is $\Pi_1^1$-conservative over $\RCA_0^*$.
Belanger's question was answered negatively in \cite{art:weakercousins}.

Perhaps surprisingly, it turns out that the principle $\coh$ restricted to finite sequences of sets, denoted by $\fincoh$ in this paper, already implies $\ind\Sigma_1^0$ over $\RCA_0^*$.
This serves as the original motivation for the present work.

The paper is organized as follows.
Section 1 reviews some basic notation and background on first- and second-order arithmetic and formally defines the principle $\fincoh$.
In Section 2, we present the proof that $\RCA_0^*+\fincoh$ implies $\ind\Sigma_1^0$.
In Section 3, we investigate the strength of $\fincoh$ over the usual base theory $\RCA_0$, showing that $\fincoh$ is provable over $\RCA_0+\bd\Sigma_2^0$, but not over $\WKL_0$. 
The paper concludes with some open questions and observations concerning these questions.

\section{Preliminaries}
We assume some basic familiarity with first- and second-order arithmetic~\cite{book:HP, book:simpson}.
The symbol $\omega$ denotes the set of standard natural numbers, whereas $\IN$ denotes the set of natural numbers as formalized within an arithmetic theory.
We write $\Sigma_n^0$, $\Pi_n^0$, $\Delta_n^0$ for the usual second-order formula classes defined by counting alternating blocks of first-order quantifiers; second-order free variables are permitted, while second-order quantifiers are excluded.
The classes $\Sigma_n$, $\Pi_n$ and $\Delta_n$ are their first-order counterparts, respectively.
For any set $A$, we write $\Sigma_n(A)$ to denote the class of $\Sigma_n^0$ formulas in which $A$ is the only second-order parameter.

The schemes $\ind\Sigma_n^0$ and $\bd\Sigma_n^0$ denote, respectively, the induction and collection schemes for $\Sigma_n^0$ formulas, and $\ind\Sigma_n$ and $\bd\Sigma_n$ are defined in a similar way.
The theory $\RCA_0$ consists of Robinson arithmetic, $\ind\Sigma_1^0$ and comprehension for $\Delta_1^0$ formulas. 
The weaker theory $\RCA_0^*$ is obtained from $\RCA_0$ by replacing $\ind\Sigma_1^0$ with $\bd\Sigma_1^0$ and adding a statement $\exp$ that guarantees the totality of exponentiation.
The theory $\WKL_0$ extends $\RCA_0$ with an axiom stating that each infinite binary tree has an infinite path.
 
For any sets $A$, $B$, the notation $A\subseteq^*B$ abbreviates $\ex b\falg x b (x\in A\rightarrow x\in B$);
the notation $A\subseteq_\cf B$ abbreviates $A\subseteq B\wedge\fain b B\exin a A a>b$.
We say a set $A$ is \defm{infinite} or \defm{unbounded} if $A\subseteq_\cf \IN$.
We write $A^c$ to denote the complement of $A$.
For any $a<b$ in $\IN$, the ``interval'' $[a,b)$ denotes the set $\{a, a+1, \dots,b-1\}$. 

We adopt the standard pairing function, where the code of an ordered pair $(a,b)$ under this pairing function is denoted by $\langle a,b\rangle$.
With such a pairing function, any set $A$ can be viewed as a sequence of sets $(A_i)_{i\in \IN}$, where $A_i = \{x\in \IN\mid \langle i,x\rangle\in A\}$.
Let $M$ be a model of $\ind\Delta_0+\exp$.
For any element $c$ in $M$, we identify $c$ with a subset of $M$, which is denoted by $\Ack c$, by defining $x\in \Ack c$ to mean the $x$-th digit in the binary expansion of~$c$ is~$1$.
Let $I$ be a proper cut of $M$.
For any set $X\subseteq I$, we say that $X$ is \defm{coded in} $I$, if there is some $c\in M$ such that $X = \Ack c\cap I$.
We define $\cod M I = \{\Ack c\cap I \mid c\in M\}$.

Formally, the cohesiveness principle is defined as follows:

\begin{description}
    \item[$\coh$]  Let $R=(R_k)_{k\in\IN}$ be an infinite sequence of sets. Then there is an infinite set $H$ such that for any $k\in\IN$, either $H\subseteq^*R_k$ or $H\subseteq^*R_k^c$ holds.
\end{description}
The finite cohesiveness principle is defined by replacing $(R_k)_{k\in\IN}$ with a sequence of finite length.
\begin{description}
    \item [$\fincoh$] Let $\ell\in\IN$ and $R=(R_k)_{k<\ell}$ be a sequence of sets of length $\ell$.
    Then there is an infinite set $H$ such that for any $k<\ell$, either $H\subseteq^*R_k$ or $H\subseteq^*R_k^c$ holds.
\end{description}
In both definitions, we say that $H$ is \defm{$R$-cohesive}.

\section{The cohesiveness principle over a weaker base theory}
To establish our main result, we construct a specific instance of $\fincoh$ such that the finite sequence of sets encodes all coded subsets of a $\Sigma_1^0$-definable cut $I$.
With the help of Lemma~\ref{lem:cmcoding}, one can show that the resulting cohesive set can be decoded as a coded subset of $I$, which turns out to be cohesive for all coded subsets of $I$. 
This yields a contradiction.

The following two lemmas concerning the failure of $\ind\Sigma_1^0$ and the coding ability of $\bd\Sigma_1^0$ will be used essentially in the proof.
We state them explicitly for the reader's convenience.
\begin{lem}[folklore]\label{lem:cffun}
    Let $(M,\mathcal{X})$ be a model of $\RCA_0^*+\neg\ind\Sigma_1^0$, and let $I\subseteq M$ be a $\Sigma_1^0$-definable proper cut.
    Then there exists a non-decreasing function $f\colon I\rightarrow M$ in $\mathcal{X}$, whose range is cofinal in $M$.
\end{lem}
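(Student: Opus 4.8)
The plan is to exploit the failure of $\ind\Sigma_1^0$ to obtain a $\Sigma_1^0$ formula defining a proper cut, and then convert the "witness function" for this $\Sigma_1^0$ definition into the desired non-decreasing cofinal map out of $I$. First I would recall the standard equivalence: over $\RCA_0^*$ (indeed over $\PAminus+\ind\Delta_0+\exp$), the scheme $\ind\Sigma_1^0$ is equivalent to the assertion that no $\Sigma_1^0$-definable subset of $M$ is a proper cut. So from $\neg\ind\Sigma_1^0$ we already have \emph{some} $\Sigma_1^0$-definable proper cut; but the hypothesis hands us a specific one, $I$, so I would work directly with $I$. Write $I = \{x \in M : \exin{y}{M}\,\phi(x,y)\}$ with $\phi \in \Delta_0$ (allowing set parameters), and note we may assume $\phi(x,y)$ is monotone in both arguments, i.e. $\phi(x,y) \wedge x' \leq x \wedge y \leq y' \rightarrow \phi(x',y')$, by replacing $\phi(x,y)$ with $\exists x'\leq x\,\exists y'\leq y\,\phi(x',y')$, which is still $\Delta_0$ using $\exp$ (bounded quantifiers).

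Next I would define the candidate function. For $x \in I$, set $f(x) = $ the least $y$ such that $\phi(x,y)$ holds. By the monotonicity arrangement, $f$ is non-decreasing on $I$: if $x \leq x'$ are both in $I$ then any witness for $x'$ is a witness for $x$, so $f(x) \leq f(x')$. The graph of $f$ is $\Delta_0$-definable (it is "$\phi(x,y) \wedge \falt{y'}{y}\neg\phi(x,y')$"), hence by $\Delta_1^0$-comprehension in $\RCA_0^*$ the function $f$, as a set of ordered pairs restricted to domain $I$... here one must be slightly careful, since $I$ is not a set. The cleaner route: let $g\colon M \to M$ be the total function $g(x) = \mu y\,[\phi(x,y)]$ if such $y$ exists, and $g(x) = 0$ otherwise; then $\dom$ of the "real" part of $g$ is exactly $I$, and $g$ exists in $\mathcal{X}$ by $\Delta_1^0$-comprehension (its graph is $\Delta_0$). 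Restricting attention to $I$, $g$ is our non-decreasing $f$.

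It remains to check that $f[I] = \{g(x) : x \in I\}$ is cofinal in $M$. Suppose not: then there is $b \in M$ with $g(x) < b$ for all $x \in I$, i.e. $\falt{x}{\IN}\,(x \in I \rightarrow \exlt{y}{b}\phi(x,y))$. But then $\exlt{y}{b}\phi(x,y)$ is a $\Delta_0$ formula that holds for all $x \in I$; and for $x \notin I$ it fails (otherwise $x \in I$). Hence $I = \{x : \exlt{y}{b}\phi(x,y)\}$ is $\Delta_0$-definable (with parameter $b$), so $I$ is a proper cut that is $\Delta_0$-definable, contradicting $\ind\Delta_0$, which holds in $\RCA_0^*$. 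Therefore $f[I]$ is cofinal in $M$, completing the proof.

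The one genuinely delicate point — the "main obstacle" such as it is — is the bookkeeping around the fact that $I$ is a definable class and not an element of $\mathcal{X}$: one cannot literally form "$f$ with domain $I$" as a set, so the function must be introduced as a total function $g$ on $M$ whose restriction to $I$ behaves as required, and one must verify that $\Delta_1^0$-comprehension genuinely applies (the graph of $g$ is $\Delta_0$, using $\exp$ to keep the witness-search bounded in the monotone reformulation, so this goes through in $\RCA_0^*$). Everything else is routine.
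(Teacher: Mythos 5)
The paper gives no proof of Lemma~\ref{lem:cffun} (it is cited as folklore), so I am comparing your argument against the standard one. Your outline follows that standard argument, but two of the concrete steps fail as written. First, the monotonization: replacing $\phi(x,y)$ by $\exists x'\leq x\,\exists y'\leq y\,\phi(x',y')$ does not work --- as soon as $y$ is large enough to witness that $0\in I$, this formula holds for \emph{every} $x$ (take $x'=0$), so the new formula defines all of $M$ rather than $I$; nor is it monotone in the direction you state. The correct replacement is $\forall x'\leq x\,\exists y'\leq y\,\phi(x',y')$, and the verification that this still defines $I$ is exactly where $\bd\Sigma_1^0$ (which $\RCA_0^*$ provides) must be invoked, to collect the witnesses for all $x'\leq x$ below a single bound. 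Your write-up never appeals to collection, which is the one genuinely non-trivial ingredient of this lemma.

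Second, the passage to the total function $g$ with $g(x)=0$ for $x\notin I$ is not justified: the clause ``$g(x)=0$ otherwise'' puts the condition $\forall y\,\neg\phi(x,y)$ into the graph of $g$, and this is $\Pi_1^0$ but need not be $\Sigma_1^0$ in $(M,\mathcal{X})$, because the complement of a $\Sigma_1^0$-definable proper cut need not itself be $\Sigma_1^0$-definable. So the claim that ``its graph is $\Delta_0$'' is false, and $\Delta_1^0$-comprehension does not yield $g$. The repair is to drop the totalization altogether: the set $\{\langle x,y\rangle: \phi(x,y)\wedge\forall y'<y\,\neg\phi(x,y')\}$, with $\phi$ the corrected monotone formula, is $\Delta_0^0$-definable, hence lies in $\mathcal{X}$, and it already \emph{is} the graph of a non-decreasing function whose domain is exactly $I$ --- which is all the lemma asks for. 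With these two repairs, your cofinality argument (a bound $b$ on the range would make $I$ coincide with the $\Delta_0^0$-definable set $\{x:\exists y<b\,\phi(x,y)\}$, contradicting $\ind\Delta_0^0$) is correct and completes the proof.
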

\begin{lem}[Chong--Mourad Coding Lemma~\cite{art:CM}]\label{lem:cmcoding}
    Let $(M,\mathcal{X})$ be a model of $\RCA_0^*$, and let $I\subseteq M$ be a $\Sigma_1^0$-definable cut.
    For any $X\subseteq M$, if $X\cap I$ and $X^c\cap I$ are both $\Sigma_1^0$-definable in $(M,\mathcal{X})$, then $X\cap I$ is coded in $I$.
\end{lem}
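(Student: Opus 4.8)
The plan is to argue by contradiction: from the failure of coding one extracts, via $\bd\Sigma_1^0$, a single number $N$ bounding all ``counterexample witnesses'', and then splits on whether that $N$ keeps one inside the cut. To set up, write $A := X \cap I$ and fix $\Delta_0^0$ formulas $\phi_0,\phi_1$ (each with a single second-order parameter from $\mathcal{X}$) presenting $A = \{x : \exists y\, \phi_0(x,y)\}$ and $X^c \cap I = \{x : \exists y\, \phi_1(x,y)\}$; note that for $x \in I$ exactly one of $\exists y\, \phi_0(x,y)$, $\exists y\, \phi_1(x,y)$ holds, while for $x \notin I$ neither does. By the usual monotonization I would also fix a $\Delta_0^0$ formula $\rho$ with $I = \{x : \exists y\, \rho(x,y)\}$ that is downward closed in its first argument, so that for each $e$ the set $I_e := \{x : \exists y < e\, \rho(x,y)\}$ is a $\Delta_0^0$-definable initial segment contained in $I$. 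We may assume $I$ is proper; fix $d \notin I$, so that $A \subseteq I \subseteq [0,d)$, and observe that $A$ is coded in $I$ if and only if $\Ack{c} \cap I = A$ for some $c < 2^d$ (replace any code by its residue modulo $2^d$).

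Now suppose $A$ is not coded in $I$. For $c < 2^d$, let $\chi(c,x)$ be the $\Sigma_1^0$ formula $(x \in \Ack{c} \wedge \exists y\, \phi_1(x,y)) \vee (x \notin \Ack{c} \wedge \exists y\, \phi_0(x,y))$; any $x$ with $\chi(c,x)$ lies in $I$ and witnesses $\Ack{c} \cap I \neq A$, so by hypothesis $\forall c < 2^d\, \exists x\, \chi(c,x)$. Since each such $x$ also has a $\rho$-witness, $\forall c < 2^d\, \exists x\, \exists y\, (\chi(c,x) \wedge \rho(x,y))$, and this is a $\Sigma_1^0$ statement; hence $\bd\Sigma_1^0$ yields an $N$ with $\forall c < 2^d\, \exists x < N\, \exists y < N\, (\chi(c,x) \wedge \rho(x,y))$. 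Thus for each $c < 2^d$ the least $x$ with $\chi(c,x)$, say $w(c)$, is below $N$, and being at most some element of the downward closed set $I_N$ it lies in $I_N$.

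The contradiction then hinges on whether $I_N$ is cofinal in $I$ or bounded in $I$. If $I_N$ is cofinal in $I$, then, being a downward closed subset of $I$, it equals $I$; so $I = \{x : \exists y < N\, \rho(x,y)\}$ is $\Delta_0^0$-definable, which for the proper cut $I$ contradicts $\ind\Delta_0^0$, provable in $\RCA_0^*$. If instead $I_N$ is bounded in $I$, fix $N' \in I$ with $I_N \subseteq [0,N')$; another application of $\bd\Sigma_1^0$ bounds, uniformly for $x < N'$, a witness for $\phi_0$ or $\phi_1$, so that $A \cap [0,N') = \{x < N' : \exists y < e\, \phi_0(x,y)\}$ for suitable $e$. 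This is a bounded $\Delta_0^0$-definable set, hence (using $\exp$ together with $\ind\Delta_0^0$) coded by some $c^* < 2^{N'} < 2^d$ with $\Ack{c^*} = A \cap [0,N')$. Then $\chi(c^*,x)$ fails for every $x < N'$, so $w(c^*) \geq N'$; but $w(c^*) \in I_N \subseteq [0,N')$ -- a contradiction. In both cases we contradict the assumption, so $A$ is coded in $I$.

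The step I expect to be the main obstacle is designing the contradiction hypothesis just right: one must bundle ``$c$ fails to code $A$'' together with a $\rho$-witness for the resulting disagreement so that one pass through $\bd\Sigma_1^0$ delivers an $N$ doing double duty -- bounding every $w(c)$ while also forcing $I_N$ to be cofinal in (hence equal to) $I$ unless it already lies below an element of $I$ -- and then to notice that the cofinal alternative makes $I$ itself $\Delta_0^0$-definable and so, by $\ind\Delta_0^0$, improper. Both uses of the two $\Sigma_1^0$ hypotheses on $X$ are essential here, since it is exactly the $\Sigma_1^0$ definition of $X^c\cap I$ that keeps $\chi$ within $\Sigma_1^0$.
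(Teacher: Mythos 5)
The paper does not prove this lemma at all --- it is quoted from Chong--Mourad --- so there is no internal proof to compare against. Your argument is a correct, self-contained proof, organized differently from the presentation one usually sees: rather than approximating $X\cap I$ by its stage-$s$ versions and locating a single stage that decides all of $I$ correctly, you quantify over all $2^d$ candidate codes at once, apply $\bd\Sigma_1^0$ to bound the disagreement witnesses (bundled with their $\rho$-witnesses for membership in $I$, which is the genuinely clever move), and then split on whether the resulting $\Delta_0^0$-definable initial segment $I_N$ is cofinal in $I$ (making $I$ itself $\Delta_0^0$-definable, against $\ind\Delta_0^0$) or bounded by some $N'\in I$ (in which case the code $c^*<2^d$ of $A\cap[0,N')$ is a candidate whose guaranteed disagreement witness in $I_N\subseteq[0,N')$ cannot exist). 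The key equivalence ``$\Ack c\cap I\neq X\cap I$ iff $\exists x\,\chi(c,x)$'' does use both $\Sigma_1^0$ definitions exactly where you say it does, and the reduction to codes $c<2^d$ is legitimate.

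One blemish: ``the least $x$ with $\chi(c,x)$'' need not exist, since $\chi$ is $\Sigma_1^0$ and the $\Sigma_1^0$ least number principle is equivalent to $\ind\Sigma_1^0$ --- precisely what is unavailable over $\RCA_0^*$ (and refuted in the intended application). This is harmless: the witness $x\in I_N$ with $\chi(c^*,x)$ handed to you directly by the collection step already does everything $w(c^*)$ is used for, so the minimization should simply be dropped. You should also make explicit that ``cut'' is read as a proper initial segment closed under successor; properness is needed for the notion ``coded in $I$'' to be the one the paper defines, and closure under successor is what turns the $\Delta_0^0$-definability of $I$ in the cofinal case into a violation of $\ind\Delta_0^0$. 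Both readings are consistent with the paper's conventions.
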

\begin{thm}\label{thm:maincoh}
    $\RCA_0^*+\fincoh\vdash\ind\Sigma_1^0$.
\end{thm}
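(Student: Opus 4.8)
The plan is to argue by contradiction: suppose $(M,\mathcal{X})\models\RCA_0^*+\fincoh+\neg\ind\Sigma_1^0$ and fix a $\Sigma_1^0$-definable proper cut $I\subseteq M$. By Lemma~\ref{lem:cffun} we obtain a non-decreasing $f\colon I\to M$ in $\mathcal{X}$ with range cofinal in $M$. The key idea, as the paragraph preceding the statement already hints, is to build a \emph{single finite} instance $R=(R_k)_{k<\ell}$ of $\fincoh$ — with $\ell$ being some nonstandard element of $M$, large enough relative to $I$ — such that the sets $R_k$ jointly encode every coded subset of $I$ (i.e.\ every set of the form $\Ack{c}\cap I$). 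Concretely, I would let $\ell$ be chosen so that $\{\Ack{c}\cap I : c<\ell\}$ already exhausts $\cod{M}{I}$; since each element of $\cod{M}{I}$ is coded below a single $b\in M$ and $I$ is bounded, such an $\ell$ can be found using that $f$ is cofinal (or just directly, since the codes themselves range over all of $M$ — one wants $\ell$ above all codes $c$ with $\Ack{c}\subseteq I$, but as this class need not be bounded, the more careful move is to index by pairs $\langle i, c\rangle$ and use $f$ to truncate, so that for each $i\in I$ and each $c$ the set $R_{\langle i,c\rangle}$ records membership of $f(i)$ in $\Ack{c}$). The precise bookkeeping of what each $R_k$ should be is where care is needed.

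Here is the shape I would aim for. For $i\in I$ and an arbitrary $c\in M$, put something like $x\in R_{\langle i,c\rangle}$ iff ``$f(i)\in\Ack{c}$ and $x$ is even'' (or some device that makes $R_{\langle i,c\rangle}$ codependent only on whether $f(i)\in\Ack{c}$, so that an $R$-cohesive set $H$ is forced, for each such $k$, into one of two states according to that bit). Then apply $\fincoh$ to get an infinite $H\in\mathcal{X}$ that is $R$-cohesive. From $H$ one reads off, for each $i\in I$ and each $c$, a bit $b(i,c)\in\{0,1\}$ recording on which side of $R_{\langle i,c\rangle}$ the set $H$ almost lies; by construction $b(i,c)=1$ iff $f(i)\in\Ack{c}$. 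Now fix any $c$ with $\Ack{c}\subseteq I$: the set $\{i\in I : f(i)\in\Ack{c}\}$ is, on the one hand, $\Sigma_1^0$-definable (it is $\{i : b(i,c)=1\}$, and $b(\cdot,c)$ is read off $H$ by a $\Sigma_1^0$ condition of the form ``$\exists$ threshold beyond which $H\restriction$ behaves thus''), and on the other hand its complement in $I$ is also $\Sigma_1^0$-definable (using the \emph{other} side of cohesiveness). By the Chong--Mourad Coding Lemma (Lemma~\ref{lem:cmcoding}) this set is coded in $I$. But $f$ is non-decreasing with cofinal range, so $i\mapsto f(i)$ essentially transfers a coded subset of $M$ onto $I$; pushing this through, one concludes that \emph{every} $\Ack{c}\cap I$ with $\Ack{c}\subseteq I$ arises from a set coded in $I$ in a uniform way, and in particular one manufactures a single coded set that is $\coh$-generic for the sequence $(\Ack{c}\cap I)_{c}$ — which is impossible, because a cohesive set for all coded subsets of $I$ cannot itself be coded in $I$ (it would have to decide its own membership indicator, a diagonalization contradiction), or alternatively contradicts $I$ being closed under the relevant operation.

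The main obstacle, and the place I would spend the most effort, is the middle step: arranging the finite sequence $R$ so that (a) $\ell\in M$ genuinely suffices to capture all of $\cod{M}{I}$ despite $\RCA_0^*$ only giving $\bd\Sigma_1^0$ rather than full induction, and (b) the two sides of the cohesiveness dichotomy for $H$ translate into genuinely $\Sigma_1^0$ definitions of both $\{i\in I : f(i)\in\Ack{c}\}$ and its complement — this is exactly the quantifier-complexity subtlety the introduction flags as the reason $\coh$ (unlike $\RT_2^2$) resists the cut-based consistency proof. One has to exploit that $H\subseteq^* R_k$ is $\Sigma_2^0$ in general but, \emph{relative to a fixed infinite $H\in\mathcal{X}$ and restricted to the cut $I$ with the cofinal function $f$ in hand}, the relevant instances collapse to $\Sigma_1^0$. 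Getting the definability of both a set and its complement inside $I$ — the precise hypothesis of Lemma~\ref{lem:cmcoding} — is the crux; once coding in $I$ is obtained, the final contradiction (that no element of $\cod{M}{I}$ can be cohesive for all of $\cod{M}{I}$) is a short diagonalization and should go through routinely.
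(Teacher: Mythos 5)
Your overall strategy is the paper's: assume a failure of $\ind\Sigma_1^0$, take the $\Sigma_1^0$-definable proper cut $I$ and the cofinal non-decreasing $f\colon I\to M$, build one finite instance of $\fincoh$ whose components represent all of $\cod M I$, push the resulting cohesive set $H$ down to a subset of $I$ via Chong--Mourad, and contradict the fact that no cofinal coded subset of $I$ is cohesive for all of $\cod M I$. You also correctly identify the crux: both the decoded set and its complement inside $I$ must be $\Sigma_1^0$ for Lemma~\ref{lem:cmcoding} to apply. However, the central construction is missing, and what you put in its place does not work.

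First, the indexing. Your fallback of indexing by pairs $\langle i,c\rangle$ with $c$ arbitrary does not yield a finite sequence, since $c$ ranges over all of $M$. The correct move, which you mention and then abandon, is to fix a single $b>I$ and index by $k<2^b$: since $\Ack c\cap I$ depends only on the bits of $c$ at positions in $I$, hence below $b$, every element of $\cod M I$ already occurs as $\Ack k\cap I$ for some $k<2^b$. Second, and more seriously, your proposed components $R_{\langle i,c\rangle}$ (``$f(i)\in\Ack c$ and $x$ is even'') carry no information extractable from cohesiveness: each such set is either the even numbers or empty, and the bit $b(i,c)$ you read off is just the $\Delta_1^0$ fact of whether $f(i)\in\Ack c$, which requires no $H$ at all. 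What is needed is to pull each coded set \emph{up} from $I$ to $M$ along the interval partition induced by $f$: set $R_k=\bigcup_{x\in\Ack k\cap I}[f(x),f(x+1))$, which is $\Delta_1^0$ because the complement has the analogous definition from $\Ack k^c\cap I$. The decoding is then $X=\{x\in I\mid [f(x),f(x+1))\cap H\neq\emptyset\}$, whose complement in $I$ is also $\Sigma_1^0$ because once $f(x)$ and $f(x+1)$ are computed the universal quantifier over $y$ becomes bounded; Chong--Mourad codes $X$ in $I$, $X$ is cofinal in $I$ because $H$ is cofinal in $M$, and splitting $X$ into its even- and odd-indexed halves gives two disjoint cofinal coded sets $X_1,X_2$ whose pullbacks $R_{k_1},R_{k_2}$ are disjoint yet both meet $H$ cofinally, contradicting cohesiveness. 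Your closing ``diagonalization'' is this splitting argument in disguise, but without the pullback construction the cohesive set $H$ never interacts with $\cod M I$, so the proof as proposed does not close.
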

\begin{proof}
    Suppose not, then there is a model $(M,\mc{X})\models \RCA_0^*+\neg\ind\Sigma_1^0+\fincoh$.
    Let $I$ be a $\Sigma_1^0$-definable proper cut of $M$.
    By Lemma~\ref{lem:cffun}, there is some non-decreasing function $f\colon I\rightarrow M$ in $\mathcal{X}$ whose range is cofinal in $M$.
    Fixing some $b>I$, let $R\subseteq M$ be such that for any $k<2^b$,
    \[\langle k,y\rangle\in R\iff\ex x (x\in{\Ack k\cap I}\wedge y\in [f(x),f(x+1))).\]
    Since 
    \[\langle k,y\rangle\notin R\iff\ex x (x\in {\Ack k^c\cap I}\wedge y\in [f(x),f(x+1))),\]
    $R$ is $\Delta_1^0$-definable in $(M,\mathcal{X})$ and thus $R\in\mathcal{X}$ is justified.
    We view $R$ as a sequence of sets $(R_k)_{k<2^b}$.
    By $\fincoh$, there exists some unbounded set $H\in\mc{X}$ that is $(R_k)_{k<2^b}$-cohesive.
    Consider the set $X\subseteq I$ defined by
    \[x\in X\iff x\in I\wedge\ex y (y\in{[f(x),f(x+1))\cap H}).\]
    Meanwhile, we have
    \[x\in X^c\cap I\iff x\in I\wedge\fa y (y\notin{[f(x),f(x+1))}\cap H).\]
    Note that the definition of $X^c\cap I$ above is actually $\Sigma_1^0$, as it is equivalent to 
    \[x\in I\wedge\ex {z,w}(f(x)=z\wedge f(x+1)=w\wedge \falt y w y\notin{[z,w)}\cap H).\]
    Thus, the set $X$ is coded in $I$ by Lemma~\ref{lem:cmcoding}.
    Notice that $X\subseteq_\cf I$ as $H\subseteq_\cf M$.
    We then find two subsets $X_1, X_2\in \cod M I$ of $X$ satisfying the following properties:
    \begin{enumerate}
        \item $X_1, X_2\subseteq_\cf I$.\label{enum:1}
        \item $X_1\cap X_2=\emptyset$.\label{enum:2}
    \end{enumerate}
    More explicitly, let $X_1,X_2\in \cod M I$ be such that for any $i\in I$,
    \begin{align*}
        i\in X_1\iff i\in X\wedge |X\restdto i| \text{ is even},\\
        i\in X_2\iff i\in X\wedge |X\restdto i| \text{ is odd},
    \end{align*} 
    where $X\restdto i=X\cap[0,i)$ and $|\cdot|$ is the cardinality function.
    Let $k_1,k_2\in M$ code $X_1, X_2$ in $I$ respectively.
    We may assume that $k_1,k_2<2^b$.
    Then, by the definition of $R$,
    \[y\in R_{k_j} \iff \exin x {X_j} (y\in [f(x),f(x+1)))\]
    for $j\in\{1,2\}$.
    Combining with the properties~(\ref{enum:1}) and (\ref{enum:2}) above, we have
    \begin{enumerate}[label=(\arabic*')]
        \item $H\cap R_{k_1}, H\cap R_{k_2}$ are both cofinal in $M$.
        \item $R_{k_1}\cap R_{k_2}= \emptyset$.
    \end{enumerate}
    So, $H\cap R_{k_1}\subseteq_\cf M$ and $H\cap R_{k_2}\subseteq H\cap R_{k_1}^c\subseteq_\cf M$, which contradicts the fact that $H$ is $(R_k)_{k<2^b}$-cohesive.
\end{proof}

\section{The strength of $\fincoh$ over $\RCA_0$}
    We now examine the strength of $\fincoh$ over the usual base theory $\RCA_0$.
    We begin by showing that $\fincoh$ is not trivial over $\RCA_0$.
    In fact, we are able to show that it is not provable over $\WKL_0$.

    To establish this result, we introduce an auxiliary principle, denoted by $\finsep$.
    The principle $\sep$ was introduced by Belanger in \cite{Belanger}, where he proved that it is equivalent to $\coh$ over $\RCA_0+\bd\Sigma_2^0$.
    The principle $\finsep$ is formulated as a finite variant of $\sep$, in direct analogy with $\fincoh$.
\begin{description}
    \item[$\sep$] \itshape For every two disjoint $\Sigma^0_2$-definable sets $A$, $B$ there exists a $\Delta^0_2$-definable set $C$ that separates $A$ and $B$, i.e., $A\subseteq C$ and $C\subseteq B^c$. 
    \item[$\finsep$] \itshape For every $b\in \IN$ and every two disjoint $\Sigma^0_2$-definable sets $A$, $B$ bounded by $b$, there exists a $\Delta^0_2$-definable set $C$ such that $A\subseteq C$ and $C\subseteq {B^c}$. 
\end{description}
\begin{lem}
    $\RCA_0+\fincoh\vdash\finsep$.
\end{lem}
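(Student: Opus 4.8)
The plan is to carry out, in a bounded form, the standard derivation of $\sep$ from $\coh$; the only point that needs attention is to arrange matters so that \emph{finitely many} sets — hence $\fincoh$ — suffice. Work inside $\RCA_0+\fincoh$. Fix $b\in\IN$ and disjoint $\Sigma_2^0$-definable sets $A,B$ with $A,B\subseteq[0,b)$, written in normal form $x\in A\iff\ex s\fa t a(x,s,t)$ and $x\in B\iff\ex s\fa t a'(x,s,t)$, where $a,a'$ are $\Delta_0$ in the (suppressed) set parameters. For $x<b$ and a stage $y$, let $d_A(x,y)$ be the least $s<y$ with $\falt t y a(x,s,t)$ if one exists and $d_A(x,y)=y$ otherwise, and define $d_B(x,y)$ analogously. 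These are $\Delta_0$ uniformly in $x$, and routine $\RCA_0$-level bookkeeping (least-number principle for $\Pi_1^0$ formulas together with $\bd\Sigma_1^0$, both available in $\RCA_0$) shows that if $x\in A$ then $d_A(x,y)$ is eventually constant, equal to the least $A$-witness for $x$, whereas if $x\notin B$ then $d_B(x,y)\to\infty$; symmetrically with $A$ and $B$ interchanged.

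For each $x<b$ put $R_x=\{y\mid d_A(x,y)<d_B(x,y)\}$ — informally, the stages at which the witness-search for ``$x\in A$'' is currently ahead of that for ``$x\in B$''. By $\Delta_1^0$-comprehension the sequence $(R_x)_{x<b}$ is a set, so $\fincoh$ gives an infinite set $H$ that is $(R_x)_{x<b}$-cohesive. If $x\in A$ then $x\notin B$ by disjointness, so $d_A(x,\cdot)$ is eventually constant while $d_B(x,\cdot)\to\infty$; hence $y\in R_x$ for all sufficiently large $y$, and therefore $H\subseteq^* R_x$. Symmetrically, if $x\in B$ then $d_B(x,\cdot)$ is eventually constant and $d_A(x,\cdot)\to\infty$, so $y\notin R_x$ for all large $y$ and $H\subseteq^* R_x^c$. (For $x\notin A\cup B$ cohesiveness still decides one of the alternatives, but which one is irrelevant.)

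Finally let $C=\{x<b\mid H\subseteq^* R_x\}$. The two facts just established give $A\subseteq C$ and $C\cap B=\emptyset$, that is, $A\subseteq C\subseteq B^c$, so $C$ separates $A$ and $B$. It remains to check that $C$ is $\Delta_2^0$-definable. The condition ``$H\subseteq^* R_x$'' is $\Sigma_2^0$ with the set parameter $H$; on the other hand, since $H$ is $(R_x)_{x<b}$-cohesive and infinite, for each $x<b$ exactly one of $H\subseteq^* R_x$ and $H\subseteq^* R_x^c$ holds, so ``$H\subseteq^* R_x$'' is equivalent to ``$\neg(H\subseteq^* R_x^c)$'', which is $\Pi_2^0$. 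Hence $C$ is $\Delta_2^0$-definable, as required.

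The one genuinely non-routine idea is the choice of the sets $R_x$: folding the two witness-searches attached to a single index $x$ into one set keeps the family finite (of length $b$), so that $\fincoh$ rather than full $\coh$ does the work; the rest is the familiar interplay between cohesive sets and $\Delta_2^0$ definability, executed with the bounded-collection resources already present in $\RCA_0$. In effect this is the bounded shadow of Belanger's equivalence between $\coh$ and $\sep$.
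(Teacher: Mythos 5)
Your proof is correct and is essentially the argument the paper relies on: the paper's own proof is a one-line citation to Belanger's derivation of $\sep$ from $\coh$, and your construction of the sets $R_x$ from competing witness-searches, followed by the $\Delta_2^0$ definition of $C$ via cohesiveness, is exactly that standard argument carried out for the finitely many indices $x<b$. The details you supply (least-number principle for $\Pi_1^0$ and $\Sigma_1^0$-collection to get the convergence/divergence behaviour of $d_A,d_B$) are all available in $\RCA_0$, so there is no gap.
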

\begin{proof}
    The argument here is exactly the same as the proof that $\coh$ implies $\sep$ over $\RCA_0$.
    (See \cite[Lemma 4.2]{Belanger}).
\end{proof}
\begin{thm}\label{thm:rcanotprovefinsep}
$\WKL_0\nvdash\finsep$.
\end{thm}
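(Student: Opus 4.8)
The plan is to construct a countable model $(M,\mathcal{X})\models\WKL_0$ in which $\finsep$ fails; since $\WKL_0\vdash\RCA_0$ and $\RCA_0+\fincoh\vdash\finsep$ (the preceding lemma), this simultaneously yields $\WKL_0\nvdash\fincoh$. Refuting $\finsep$ in $(M,\mathcal{X})$ means exhibiting a nonstandard $b\in M$ together with disjoint $\Sigma_2^0$-definable sets $A,B\subseteq[0,b)$ that no $\Delta_2^0$-definable set — with set parameters from $\mathcal{X}$ allowed — separates. Two observations fix the shape of the model. First, $b$ must be nonstandard: if $b\in\omega$ then $A$ is a coded finite set separating $A$ from $B$, so every $\omega$-model satisfies $\finsep$. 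Secondly, the relevant instances of $\bd\Sigma_2^0$ must fail in $(M,\mathcal{X})$, since by Section~3 and the preceding lemma $\RCA_0+\bd\Sigma_2^0\vdash\fincoh\vdash\finsep$; we arrange this by taking $M$ to be a countable nonstandard model of $\ind\Sigma_1+\neg\bd\Sigma_2$, so the failure of $\bd\Sigma_2$ enters essentially.

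The combinatorial heart is a self-referential pair, built by a limit-lemma diagonalization, that already defeats every parameter-\emph{free} $\Delta_2^0$ definition. Work in any model with a nonstandard $b$. Fix a $\{0,1\}$-valued $\Sigma_2^0$ partial function $\Theta$ — so ``$\Theta_e(x)=i$'' ($i\in\{0,1\}$) is $\Sigma_2^0$ uniformly in $e,x$, with at most one value for each pair — that is universal in the sense that every $\Delta_2^0$-definable set equals $\{x:\Theta_e(x)=1\}$ for some index $e$ with $\Theta_e$ total; such $\Theta$ exists by the limit lemma, taking $\Theta_e(x)=\lim_s\widehat g_e(x,s)$ for a uniformly total recursive ``catch-up'' approximation $\widehat g_e$. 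Writing $(x)_0$ for the first coordinate under the standard pairing, set
\[
A=\{\,x<b:\Theta_{(x)_0}(x)=0\,\}\qquad\text{and}\qquad B=\{\,x<b:\Theta_{(x)_0}(x)=1\,\}.
\]
These are $\Sigma_2^0$-definable and disjoint. If $C$ were a parameter-free $\Delta_2^0$-definable separator, write $C=\{x:\Theta_e(x)=1\}$ with $e$ standard and $\Theta_e$ total, and take $x^{*}=\langle e,0\rangle$, which is standard, hence $<b$; then $x^{*}\in A\iff\Theta_e(x^{*})=0\iff x^{*}\notin C$ and $x^{*}\in B\iff x^{*}\in C$, so $x^{*}\notin C$ contradicts $A\subseteq C$ while $x^{*}\in C$ contradicts $C\cap B=\emptyset$. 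Hence $A,B$ admit no parameter-free $\Delta_2^0$-definable separator.

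It remains to build $\mathcal{X}$ so that $(M,\mathcal{X})\models\WKL_0$ while no set in $\mathcal{X}$ $\Delta_2^0$-defines a separator of $A$ from $B$ — equivalently, $A,B$ stays $\Delta_2^0(Z)$-inseparable for every $Z\in\mathcal{X}$. Starting from the recursive subsets of $M$, run the usual model-theoretic construction witnessing the conservativity of $\WKL_0$ over $\RCA_0$ — an $\omega$-length iteration adjoining, at each step, a path through the next infinite binary tree of the eventual model — but take each adjoined path to be \emph{low} over the join of everything adjoined so far (this can be arranged externally since $M$ is countable), dovetailing so that every relevant tree is eventually treated. Then the external jump of the join of the first $n$ stages is Turing-equivalent to $\emptyset'$ for all $n$; consequently, for every $Z\in\mathcal{X}$ the $\Delta_2^0(Z)$-definable subsets of $M$ coincide with the $\Delta_2^0(\emptyset)$-definable ones, so the inseparability from the second paragraph is never destroyed by a parameter, and $(M,\mathcal{X})$ witnesses $\neg\finsep$. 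The step I expect to be the main obstacle is precisely this last one: arranging the $\WKL_0$-closure so that it covers all the required trees \emph{and} validates $\ind\Sigma_1^0$ for the adjoined sets \emph{and} never manufactures a $\Delta_2^0$-separator — a combination feasible exactly because $M\models\neg\bd\Sigma_2$, since over a model of $\bd\Sigma_2$ no $\WKL_0$-expansion can avoid such a separator.
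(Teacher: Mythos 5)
Your overall strategy is the paper's: expand a first-order model to a $\WKL_0$ model via iterated low paths, and defeat every $\Delta_2^0$ separator by a $\Sigma_2$ diagonalization against a universal enumeration, using a nonstandard bound $b$. The diagonalization itself (paragraph two) is essentially correct and matches the paper. But there is a genuine gap in how you pass from ``no \emph{parameter-free} $\Delta_2^0$ separator'' to ``no $\Delta_2^0$ separator at all''. Lowness of the adjoined sets only removes the \emph{second-order} parameter: it converts a $\Delta_2^0(Z)$ definition of $C$ into a $\Delta_2$ definition over $M$, but that definition may still contain a (possibly nonstandard) \emph{number} parameter $d$. A $\Delta_2$ definition with nonstandard parameter corresponds to a nonstandard index $e$ in your universal enumeration $\Theta$, and then your diagonal point $\langle e,0\rangle$ need not lie below $b$, so the argument that $A,B$ trap $C$ breaks down. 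Taking $M\models\ind\Sigma_1+\neg\bd\Sigma_2$ is not enough to rule this out. The paper's fix is to start from a \emph{pointwise $\Sigma_2$-definable} model of $\ind\Sigma_1$: every element of $M$ is $\Sigma_2$-definable without parameters, so the number parameter $d$ can be replaced by its $\Sigma_2$ definition, yielding parameter-free $\Sigma_2$ definitions of $C$ and $C^c$ whose indices are standard, after which the diagonalization applies. Your proposal has no mechanism for this step.

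A secondary, smaller issue: ``external'' lowness ($Z'\equiv_T\emptyset'$ in the real world) is not the right notion over a nonstandard $M$. One needs a formalized, definable form of lowness --- the paper uses the formalized very low basis theorem, which guarantees that every $\Sigma_1(Z)$-definable subset of $M$ is $\Sigma_0^{\exp}(\Sigma_1)$-definable over $M$ and that $\ind\Sigma_1^0$ is preserved. Without an internal statement of this kind, the claim that $\Delta_2^0(Z)$-definability collapses to $\Delta_2$-definability over $M$ has no meaning for nonstandard reductions, and your acknowledged ``main obstacle'' cannot be discharged by a purely external recursion-theoretic argument. So the skeleton is right, but the two load-bearing ingredients --- pointwise $\Sigma_2$-definability of $M$ and a formalized low basis theorem --- are missing, and the first of these is not merely a technicality: without it the theorem as you have set it up is not proved.
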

\begin{proof}
    Let $M\models\ind\Sigma_1$ be a pointwise $\Sigma_2$-definable model.
    (For a construction of such a model, see e.g., ~\cite[Theorem IV.1.33]{book:HP}.)
    By iteratively applying the formalized very low basis theorem within $M$ (see~\cite[Theorem I.3.8, Remark I.3.21]{book:HP}), we expand $M$ to some second-order model $(M,\mathcal{X})\models\WKL_0$, such that for each $A\in\mathcal{X}$, any $\Sigma_1(A)$-definable subset of $M$ is $\Sigma_0^\exp(\Sigma_1)$-definable\footnote{The formula class $\Sigma_0^\exp(\Sigma_1)$ is the closure of the class of $\Sigma_1$ formulas under Boolean operations and bounded quantifiers of the form $\exists {x<{t(\ov y)}}$ or $\forall {x<{t(\ov y)}}$, where $t$ is a term in which $\exp$ may occur as a function symbol.} in $M$.
    
    We claim that $(M,\mathcal{X})$ does not satisfy $\finsep$.
    Suppose otherwise; let $(\Phi_e)_{e\in M}$ be an effective list of all Turing functionals in $M$. (For Turing functionals and Turing reducibility formalized within fragments of $\PA$, see~\cite{art:CYjumpcut}.)
    Fixing any $b>\omega$ and a universal $\Sigma_1$-definable set $0'$, we consider the following $\Sigma_2$-definable sets bounded by $b$.
    \begin{align*}
        A=\{e<b\mid \Phi_e^{0'}(e)\downarrow=0\},\\
        B=\{e<b\mid \Phi_e^{0'}(e)\downarrow=1\}.
    \end{align*}
    By $\finsep$, there is a $\Delta_2^0$-definable set $C\subseteq M$ that separates $A$ and $B$.

    We now eliminate all occurrences of first- and second-order parameters in $C$.
    Without loss of generality, we may assume that
    \begin{align*}
        x\in C\iff \ex u\fa v \lambda(u,v,x,d,D),\\
        x\notin C\iff \ex u\fa v\mu(u,v,x,d,D),
    \end{align*}
    where $\lambda$ and $\mu$ are $\Delta_0^0$ formulas, and $d, D$ are the only first- and second-order parameters appearing in these formulas.
    By the construction of $\mathcal{X}$, the formula $\fa v\lambda(u,v,x,d,D)$ is equivalent to a $\Sigma_0^{\exp}(\Sigma_1)$ formula in $M$, with no second-order parameters.
    Over $\ind\Sigma_1$, every $\Sigma_0^\exp (\Sigma_1)$ formula is equivalent to some $\Delta_2$ formula~\cite[Lemma I.2.76]{book:HP}.
    It follows that both $C$ and its complement are definable by some $\Sigma_2$ formulas, say $\phi(x,e)$ and $\psi(x,e)$, respectively, where $e$ is the only parameter occuring in these definitions. 
    Since $M$ is pointwise $\Sigma_2$-definable, we may assume that $e$ is defined by a $\Sigma_2$ formula $\delta(e)$.
    Consequently, both $C$ and its complement are defined by the following parameter-free $\Sigma_2$ formulas:
    \begin{align*}
        x\in C\iff \ex e(\delta(e)\wedge \phi(x,e)),\\
        x\notin C\iff \ex e(\delta(e)\wedge \psi(x,e)).
    \end{align*}
    
    By \cite[Corollary 3.1]{art:CYjumpcut}, the set $C$ is weakly recursive in $0'$, that is, there exists some $e_0\in M$ such that 
    \begin{align*}
        x\in C\Rightarrow\Phi_{e_0}^{0'}(x)=1,\\
        x\notin C\Rightarrow\Phi_{e_0}^{0'}(x)=0.
    \end{align*}
    Moreover, the index $e_0$ can be effectively computed from the indices of the pair of $\Sigma_2$ definitions of $C$ and its complement, which are standard natural numbers.
    So, we may assume that $e_0\in\omega$.
    Then, by the definitions of $C$ and $e_0$,
    \begin{align*}
        \Phi_{e_0}^{0'}(e_0)=0\Rightarrow e_0\in A\Rightarrow e_0\in C\Rightarrow\Phi_{e_0}^{0'}(e_0)=1,\\
        \Phi_{e_0}^{0'}(e_0)=1\Rightarrow e_0\in B\Rightarrow e_0\notin C\Rightarrow\Phi_{e_0}^{0'}(e_0)=0,
    \end{align*}  
    which is impossible as $\Phi_{e_0}^{0'}$ is a total function with outputs in $\{0,1\}$.
\end{proof}
\begin{cor}\label{cor:wklnot->fincoh}
    $\WKL_0\nvdash\fincoh$.\qed
\end{cor}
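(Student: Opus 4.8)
The plan is to deduce this immediately by chaining the two preceding results. Recall that the Lemma just before Theorem~\ref{thm:rcanotprovefinsep} establishes $\RCA_0 + \fincoh \vdash \finsep$, and $\WKL_0$ extends $\RCA_0$ by definition. So if $\WKL_0$ proved $\fincoh$, then $\WKL_0$ would also prove $\finsep$.

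The key step is then to invoke Theorem~\ref{thm:rcanotprovefinsep}, which says $\WKL_0 \nvdash \finsep$. This yields a direct contradiction, so $\WKL_0 \nvdash \fincoh$.

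There is essentially no obstacle here: all the genuine work has already been carried out in the construction of the pointwise $\Sigma_2$-definable model $(M,\mathcal{X}) \models \WKL_0$ in the proof of Theorem~\ref{thm:rcanotprovefinsep} (the iterated very low basis theorem, the parameter-elimination argument, and the appeal to \cite{art:CYjumpcut} that $C$ is weakly recursive in $0'$). One could alternatively phrase the corollary model-theoretically: the very same $(M,\mathcal{X})$ witnesses $\WKL_0 + \neg\finsep$, and since any model of $\fincoh$ over $\RCA_0$ satisfies $\finsep$, this $(M,\mathcal{X})$ also fails $\fincoh$; but the syntactic chaining of the Lemma with Theorem~\ref{thm:rcanotprovefinsep} is the cleanest route and is what I would write.

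\begin{proof}
By the Lemma preceding Theorem~\ref{thm:rcanotprovefinsep}, $\RCA_0 + \fincoh \vdash \finsep$. Since $\WKL_0$ extends $\RCA_0$, if $\WKL_0 \vdash \fincoh$ then $\WKL_0 \vdash \finsep$, contradicting Theorem~\ref{thm:rcanotprovefinsep}.
\end{proof}
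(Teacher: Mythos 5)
Your proposal is correct and is exactly the intended argument: the corollary is marked \qed in the paper precisely because it follows immediately by chaining the Lemma ($\RCA_0+\fincoh\vdash\finsep$) with Theorem~\ref{thm:rcanotprovefinsep}. Nothing further is needed.
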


On the other hand, a simple argument shows that $\bd\Sigma_2^0$ implies $\fincoh$ over $\RCA_0$.
Therefore, the ``second-order'' character of $\fincoh$ becomes apparent only in the absence of sufficiently strong induction or collection.
    
\begin{proposition}\label{prop:bdsigma2->fincoh}
    $\RCA_0+\bd\Sigma_2^0\vdash\fincoh$.
\end{proposition}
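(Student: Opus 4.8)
The plan is to observe that restricting $\coh$ to a finite sequence essentially collapses it to the infinite pigeonhole principle for a (possibly nonstandard) finite number of colours, which is well known to follow from $\bd\Sigma_2^0$. Concretely, fix $\ell\in\IN$ and $R=(R_k)_{k<\ell}$. Since $\RCA_0$ proves the totality of exponentiation, $2^\ell$ exists, and we may define the colouring $g$ sending each $x$ to the code of its \emph{type} $\{k<\ell:x\in R_k\}$, regarded as a subset of $\ell$; formally, $g(x)=v$ iff $v<2^\ell\wedge\fa{k<\ell}(k\in\Ack v\nsc x\in R_k)$. Using $\exp$, this relation is $\Delta_1^0$ in the parameters $\ell$ and $R$, and hence so is, uniformly in $c<2^\ell$, the \emph{atom} $A_c=\{x:g(x)=c\}$; in particular each $A_c$ lies in $\mathcal X$ by $\Delta_1^0$-comprehension, and $\IN=\bigcup_{c<2^\ell}A_c$ since $x\in A_{g(x)}$ for every $x$.

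The key step is that some atom is unbounded. If not, then $\fa{c<2^\ell}\ex b\fa x(x\in A_c\then x<b)$. The matrix $\fa x(x\in A_c\then x<b)$ is $\Pi_1^0$, so by $\bd\Pi_1^0$ --- a consequence of $\bd\Sigma_2^0$ over $\RCA_0$ --- there is a single $B$ with $\fa{c<2^\ell}\ex{b<B}\fa x(x\in A_c\then x<b)$, whence $\fa{c<2^\ell}\fa x(x\in A_c\then x<B)$; but the $A_c$ exhaust $\IN$, so this bounds $\IN$, a contradiction. Fix $c<2^\ell$ with $H:=A_c$ unbounded. Then $H\in\mathcal X$, and for every $k<\ell$ we have $H\subseteq R_k$ when $k\in\Ack c$ and $H\subseteq R_k^c$ otherwise; in either case $H\subseteq^*R_k$ or $H\subseteq^*R_k^c$, so $H$ is $R$-cohesive. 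This is precisely the argument showing that the variable-colour pigeonhole principle $\RT^1$, equivalent to $\bd\Sigma_2^0$ over $\RCA_0$ by a theorem of Hirst, implies $\fincoh$.

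I do not expect a genuine obstacle. The only points needing (entirely routine) care are making sense of $2^\ell$ and of the colouring $g$ when $\ell$ is nonstandard, verifying that $g$ and the atoms $A_c$ are $\Delta_1^0$ uniformly in $\ell$ and $R$ so that $\Delta_1^0$-comprehension applies and the atoms land in $\mathcal X$, and extracting $\bd\Pi_1^0$ from $\bd\Sigma_2^0$. The conceptual point is simply that passing to a finite sequence removes the second-order strength of $\coh$, leaving a bounded (but possibly nonstandard) instance of the infinite pigeonhole principle.
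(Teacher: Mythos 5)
Your proof is correct and is essentially the paper's own argument: your atoms $A_c$ are exactly the paper's cells $R_\sigma=\bigcap_{k<\ell}R_k^{\sigma(k)}$ (with $c$ the code of the string $\sigma$), and both proofs use $\bd\Sigma_2^0$ (via $\bd\Pi_1^0$) to conclude that one of the $2^\ell$ cells partitioning $\IN$ must be unbounded, which is then the cohesive set. The only difference is presentational (a colouring by types versus intersections indexed by binary strings).
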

\begin{proof}
    Let $(M,\mathcal{X})\models\RCA_0+\bd\Sigma_2^0$ and $(R_k)_{k<l}$ be a sequence of sets in $\mathcal{X}$.
    For any binary string $\sigma$ of length $l$ in $M$, define 
    \[R_\sigma=\bigcap_{k<l}R_k^{\sigma(i)},\]
    where $R_k^1=R_k$ and $R_k^0=R_k^c$.
    Then 
    \[(M,\mathcal{X})\models\fa x\exlt \sigma {2^l}x\in R_\sigma.\]
    Here we identify $\sigma$ with its code in $M$.
    Applying $\bd\Sigma_2^0$, we have
    \[(M,\mathcal{X})\models\exlt \sigma {2^l}\fa b\exlg x b x\in R_\sigma.\]
    Any $R_\sigma$ that is cofinal in $M$ will be $(R_k)_{k<l}$-cohesive.
\end{proof}

Collecting the results above, the strength of $\fincoh$ is determined in all cases except when $\RCA_0+\neg\bd\Sigma_2^0$ holds. Yokoyama~[personal communication] asked whether $\fincoh$ is actually equivalent to $\coh$ over $\RCA_0+\neg\bd\Sigma_2^0$.

\begin{question}[Yokoyama]
    Does $\RCA_0+\neg\bd\Sigma_2^0+\fincoh$ imply $\coh$?
\end{question}

At present, we are unable to answer this question.
Nevertheless, the following proposition demonstrates that if the answer is negative, then the first-order part of any model that witnesses the separation must satisfy a certain arithmetic principle lying strictly between $\ind\Sigma_1$ and $\bd\Sigma_2$.
This suggests the subtlety of the question.

The assumption in Proposition~\ref{prop:open} may be viewed as the failure of the surjective pigeonhole principle for $\Sigma_2^0$-definable partial functions.
Notice that the model constructed in Theorem~\ref{thm:rcanotprovefinsep} satisfies this assumption.
\begin{proposition}\label{prop:open}
    Let $(M,\mathcal{X})\models\RCA_0+\fincoh$.
    If there exist some $b\in M$ and a binary function $g\colon M\times M\to M$ in $\mathcal{X}$ such that for any $i\in M$, there is some $j<b$ satisfying $\lim_{s\to\infty} g(j,s)=i$, then $(M,\mathcal{X})\models\coh$.
\end{proposition}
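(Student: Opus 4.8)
The plan is to use the bounded $\Sigma_2^0$-surjection $g$ to compress an arbitrary infinite instance of $\coh$ into an instance of $\fincoh$, and then to observe that the finite-cohesive set returned by $\fincoh$ is automatically cohesive for the original sequence. So the work is really just in setting up the right encoding.

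Concretely, I would let $(R_k)_{k\in M}$ be an arbitrary sequence of sets in $\mathcal{X}$, coded by a single $R\in\mathcal{X}$ with $z\in R_k\iff\langle k,z\rangle\in R$, and define a set $T$ by
\[\langle j,y\rangle\in T\iff j<b\wedge\langle g(j,y),y\rangle\in R,\]
so that, writing $T_j=\{y:\langle j,y\rangle\in T\}$, we have $y\in T_j\iff y\in R_{g(j,y)}$ for each $j<b$. Since $g,R\in\mathcal{X}$, the set $T$ is $\Delta_1^0$-definable with parameters, hence $T\in\mathcal{X}$ by $\Delta_1^0$-comprehension in $\RCA_0$. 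Applying $\fincoh$ to the length-$b$ sequence $(T_j)_{j<b}$ yields an infinite $H\in\mathcal{X}$ that is $(T_j)_{j<b}$-cohesive.

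The crux is that the ``time'' argument of $g$ has deliberately been identified with the tested element $y$. Thus if $\lim_{s\to\infty}g(j,s)=i$ with threshold $s_0$, then $g(j,y)=i$ for every $y\geq s_0$, so $T_j$ and $R_i$ agree on the final segment $[s_0,\infty)$ of $M$; consequently $H\subseteq^*T_j\iff H\subseteq^*R_i$ and $H\subseteq^*T_j^c\iff H\subseteq^*R_i^c$ for this $H$ (and indeed for any set). Now fix an arbitrary $k\in M$, and use the hypothesis to pick $j<b$ with $\lim_{s\to\infty}g(j,s)=k$. Since $H$ is cohesive for $T_j$, either $H\subseteq^*T_j$ or $H\subseteq^*T_j^c$, and by the agreement on a tail this gives $H\subseteq^*R_k$ or $H\subseteq^*R_k^c$. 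As $k\in M$ was arbitrary and $H$ is infinite, $H$ is $R$-cohesive, so $(M,\mathcal{X})\models\coh$.

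I do not expect a genuine obstacle here; the proof is short. The only points requiring care are the parameter/definability bookkeeping ensuring $T\in\mathcal{X}$, and the observation that one never needs $g(j,\cdot)$ to converge for the ``junk'' indices $j<b$ for which no limit exists: $H$ is cohesive for those $T_j$ as well by $\fincoh$, and they are simply not used.
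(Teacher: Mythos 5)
Your proposal is correct and follows essentially the same route as the paper: the sequence $T_j$ with $y\in T_j\iff y\in R_{g(j,y)}$ is exactly the paper's $S_j$, and the tail-agreement argument via the limit threshold $s_0$ is identical. The extra bookkeeping about $T\in\mathcal{X}$ and the unused ``junk'' indices is fine but not needed beyond what the paper already implicitly handles.
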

\begin{proof}
    Let $R=(R_i)_{i\in M}\in\mathcal{X}$ be an infinite sequence of sets. We define a finite sequence of sets $S=(S_j)_{j<b}$ in $\mathcal{X}$ by 
    \[x\in S_j\iff x\in R_{g(j,x)}.\]
    By $\fincoh$ in $M$, there is some $H\in\mathcal{X}$ that is $S$-cohesive. We claim that $H$ is also $R$-cohesive.
    For any $i\in M$, by assumption, there are some $j<b$ and $s_0\in M$ such that $\falg s {s_0} g(j,s)=i$.
    By the definition of $S$,
    \[(M,\mathcal{X})\models\falg x {s_0}(x\in S_j\leftrightarrow x\in R_i),\]
    So, $H\subseteq^*R_i$ or $H\subseteq^*R_i^c$ as $H$ is $S$-cohesive.
\end{proof}
Another potentially interesting question concerns the following issue: while $\coh$ implies $\ind\Sigma_1^0$ over $\RCA_0^*$, it remains unknown whether $\sep$, which is equivalent to $\coh$ over $\RCA_0+\bd\Sigma_2^0$, is consistent with $\RCA_0^*+\neg\ind\Sigma_1^0$.
\begin{question}
    Does $\sep$ imply $\ind\Sigma_1^0$ over $\RCA_0^*$?
\end{question}
We remark that combining the technique of ~\cite{art:weakercousins} and Theorem~\ref{thm:rcanotprovefinsep}, one can show that $\finsep$ is already not $\forall \Pi_5^0$-conservative over $\RCA_0^*$. 
\section*{Acknowledgement}
The author's research was partially supported by the Singapore Ministry of Education grant MOE-000538-01, as well as by the NUS grants WBS A-0008494-00-00 and A-0008454-00-00. This work is contained in the author's Ph.D. thesis in the National University of Singapore. I would like to thank my two supervisors during my Ph.D, Tin Lok Wong and Yue Yang, for their guidance and encouragement. 
I would also like to thank Leszek A. Ko\l{}odziejczyk for careful proofreading and helpful suggestions.
I am also indebted to David R. Belanger and Keita Yokoyama for helpful discussions and suggestions.
\bibliographystyle{amsplain}
\bibliography{end}

@PREAMBLE {"\let\jname\emph"}

@article {art:y_ramsey_rca*,
    AUTHOR = {Yokoyama, Keita},
     TITLE = {On the strength of {R}amsey's theorem without
              {$\Sigma_1$}-induction},
   JOURNAL = {MLQ Math. Log. Q.},
  FJOURNAL = {MLQ. Mathematical Logic Quarterly},
    VOLUME = {59},
      YEAR = {2013},
    NUMBER = {1-2},
     PAGES = {108--111},
      ISSN = {0942-5616,1521-3870},
   MRCLASS = {03F35 (03B30 03C62 03H15 05D10)},
  MRNUMBER = {3032429},
MRREVIEWER = {Damir\ D.\ Dzhafarov},
       DOI = {10.1002/malq.201200047},
       URL = {https://doi.org/10.1002/malq.201200047},
}

@phdthesis{MiletiPhD,
  author       = {Mileti, Joseph R.},
  title        = {Partition theorems and computability theory},
  year         = {2004},
  school       = {University of Illinois at Urbana-Champaign}
}

@misc{Belanger,
  author       = {Belanger, David R.},
  title        = {Conservation theorems for the cohesiveness principle},
  year         = {2022},
  eprint       = {2212.13011},
  archivePrefix= {arXiv},
  primaryClass = {math.LO},
  note         = {arXiv:2212.13011}
}

@article {art:CJSramsey,
    AUTHOR = {Cholak, Peter A. and Jockusch, Carl G. and Slaman, Theodore
              A.},
     TITLE = {On the strength of {R}amsey's theorem for pairs},
   JOURNAL = {J. Symbolic Logic},
  FJOURNAL = {The Journal of Symbolic Logic},
    VOLUME = {66},
      YEAR = {2001},
    NUMBER = {1},
     PAGES = {1--55},
      ISSN = {0022-4812,1943-5886},
   MRCLASS = {03F35 (03C62 03D30 03D80)},
  MRNUMBER = {1825173},
MRREVIEWER = {Roman\ Murawski},
       DOI = {10.2307/2694910},
       URL = {https://doi.org/10.2307/2694910},
}

@article {art:CYjumpcut,
    AUTHOR = {Chong, C. T. and Yang, Yue},
     TITLE = {The jump of a {$\Sigma_n$}-cut},
   JOURNAL = {J. Lond. Math. Soc. (2)},
  FJOURNAL = {Journal of the London Mathematical Society. Second Series},
    VOLUME = {75},
      YEAR = {2007},
    NUMBER = {3},
     PAGES = {690--704},
      ISSN = {0024-6107,1469-7750},
   MRCLASS = {03F30 (03D20 03H15)},
  MRNUMBER = {2352730},
MRREVIEWER = {Thomas\ McLaughlin},
       DOI = {10.1112/jlms/jdm016},
       URL = {https://doi.org/10.1112/jlms/jdm016},
}

@article {art:weakercousins,
    AUTHOR = {Fiori-Carones, Marta and Ko\l{}odziejczyk, Leszek Aleksander
              and Kowalik, Katarzyna W.},
     TITLE = {Weaker cousins of {R}amsey's theorem over a weak base theory},
   JOURNAL = {Ann. Pure Appl. Logic},
  FJOURNAL = {Annals of Pure and Applied Logic},
    VOLUME = {172},
      YEAR = {2021},
    NUMBER = {10},
     PAGES = {Paper No. 103028, 22},
      ISSN = {0168-0072,1873-2461},
   MRCLASS = {03B30 (03F30 03F35 03H15 05D10)},
  MRNUMBER = {4298991},
MRREVIEWER = {Soowhan\ Yoon},
       DOI = {10.1016/j.apal.2021.103028},
       URL = {https://doi.org/10.1016/j.apal.2021.103028},
}

@article {art:CM,
    AUTHOR = {Chong, C. T. and Mourad, K. J.},
     TITLE = {The degree of a {$\Sigma_n$} cut},
   JOURNAL = {Ann. Pure Appl. Logic},
  FJOURNAL = {Annals of Pure and Applied Logic},
    VOLUME = {48},
      YEAR = {1990},
    NUMBER = {3},
     PAGES = {227--235},
      ISSN = {0168-0072,1873-2461},
   MRCLASS = {03F30 (03D25 03D30 03H15)},
  MRNUMBER = {1073220},
MRREVIEWER = {Thomas\ McLaughlin},
       DOI = {10.1016/0168-0072(90)90021-S},
       URL = {https://doi.org/10.1016/0168-0072(90)90021-S},
}

@book {book:HP,
    AUTHOR = {H\'{a}jek, Petr and Pudl\'{a}k, Pavel},
     TITLE = {Metamathematics of first-order arithmetic},
    SERIES = {Perspectives in Mathematical Logic},
      NOTE = {Second printing},
 PUBLISHER = {Springer-Verlag, Berlin},
      YEAR = {1998},
     PAGES = {xiv+460},
      ISBN = {3-540-63648-X},
   MRCLASS = {03-02 (03D15 03F30 03H15 11U09 11U10 68Q15)},
  MRNUMBER = {1748522},
}

@article {art:KKY,
    AUTHOR = {Ko{\l}odziejczyk, Leszek Aleksander and Kowalik, Katarzyna W.
              and Yokoyama, Keita},
     TITLE = {How strong is {R}amsey's theorem if infinity can be weak?},
   JOURNAL = {J. Symb. Log.},
  FJOURNAL = {The Journal of Symbolic Logic},
    VOLUME = {88},
      YEAR = {2023},
    NUMBER = {2},
     PAGES = {620--639},
      ISSN = {0022-4812,1943-5886},
   MRCLASS = {03B30 (03F30 03F35 03H15 05D10)},
  MRNUMBER = {4594293},
       DOI = {10.1017/jsl.2022.46},
       URL = {https://doi.org/10.1017/jsl.2022.46},
}

@book {book:simpson,
    AUTHOR = {Simpson, Stephen G.},
     TITLE = {Subsystems of second order arithmetic},
    SERIES = {Perspectives in Mathematical Logic},
 PUBLISHER = {Springer-Verlag, Berlin},
      YEAR = {1999},
     PAGES = {xiv+445},
      ISBN = {3-540-64882-8},
   MRCLASS = {03F35 (03-02 03B30)},
  MRNUMBER = {1723993},
MRREVIEWER = {Michael\ M\"{o}llerfeld},
       DOI = {10.1007/978-3-642-59971-2},
       URL = {https://doi.org/10.1007/978-3-642-59971-2},
}

\end{document}